\newcommand\R{{\mathbb{R}}}
\newcommand\Z{{\mathbf{Z}}}
\renewcommand\P{{\mathbf{P}}}
\newcommand\eps{{\varepsilon}}
\theoremstyle{plain}
  \newtheorem{theorem}{Theorem}[section]
  \newtheorem{conjecture}[theorem]{Conjecture}
  \newtheorem{proposition}[theorem]{Proposition}
  \newtheorem{lemma}[theorem]{Lemma}
  \newtheorem{corollary}[theorem]{Corollary}
\theoremstyle{remark}
\theoremstyle{definition}
  \newtheorem{definition}[theorem]{Definition}
\begin{document}

\title[Simple spectrum]{Random  matrices have simple spectrum}

\author{Terence Tao}
\address{Department of Mathematics, UCLA, Los Angeles CA 90095-1555}
\email{tao@math.ucla.edu}
\thanks{T. Tao is supported by a Simons Investigator grant, the
James and Carol Collins Chair, the Mathematical Analysis \&
Application Research Fund Endowment, and by NSF grant DMS-1266164.}

\author{Van Vu}
\address{Department of Mathematics, Yale University, New Haven 06520}
\email{van.vu@yale.edu}
\thanks{V. Vu is supported by   NSF  grant DMS 1307797  and AFORS grant FA9550-12-1-0083.}

\begin{abstract}  Let $M_n = (\xi_{ij})_{1 \leq i,j \leq n}$ be a real symmetric random matrix in which the upper-triangular entries $\xi_{ij}, i<j$ and diagonal entries $\xi_{ii}$ are independent.  We show that  with probability tending to 1, $M_n$ has no  repeated eigenvalues. As a corollary, we deduce that 
 the  Erd{\H o}s-Renyi random graph has simple spectrum asymptotically almost surely, answering a question of Babai. 
\end{abstract}

\maketitle

\setcounter{tocdepth}{2}
%\tableofcontents

\section{Introduction}

Let $n$ be an asymptotic parameter going to infinity; we allow all mathematical objects in the discussion below to depend on $n$ unless explicitly declared to be fixed.  Asymptotic notation such as $o(1)$, $O()$, or $\ll$ will always be understood to be with respect to the asymptotic limit $n \to \infty$, for instance $X \ll Y$ denotes the claim that $X \leq CY$ for sufficiently large $n$ and for a fixed $C$ independent of $n$.  
%We also suppress floors and ceilings for sake of notational simplicity, for instance writing $n^\eps$ instead of $\lfloor n^\eps \rfloor$ when it is clear that the quantity in question needs to be a natural number.

In this paper, we study  the spectrum of the following general random matrix model.

\begin{definition}[A general model]\label{model}  We consider real symmetric random matrices $M_n$ of the form $M_n = (\xi_{ij})_{1 \leq i,j \leq n}$, where the entries $\xi_{ij}$ for $i \leq j$ are jointly independent with $\xi_{ji} = \xi_{ij}$, the upper-triangular entries $\xi_{ij}$, $i<j$ have distribution $\xi$ for some real random variable $\xi$ (which may depend on $n$).  The diagonal entries $\xi_{ii}$, $1 \leq i \leq n$ can have an arbitrary real distribution (and can be correlated with each other), but are required to be independent of the upper diagonal entries $\xi_{ij}$, $1 \leq i < j \leq n$.  
\end{definition}
\vskip2mm 

Important classes of matrices covered by this model include real symmetric Wigner matrix ensembles (e.g. random symmetric sign matrices) and the adjacency matrix of an Erd\"os-R\'enyi random graph $G(n,p)$ (note that we permit the diagonal entries to be identically zero).  Notice that we do not  require the distribution $\xi$ to have zero mean, or even to be absolutely integrable. As a matter of fact, our proofs do not require the entries to be iid, either; see Section \ref{section:remarks} for details. However, for sake of simplicity, we make the iid assumption in the main sections of this paper.

\vskip2mm 

The spectrum of a symmetric matrix is real, and we say that it is simple if all eigenvalues have multiplicity one.  This paper deals with the following basic question

\vskip2mm 
\centerline {\it Is it true that the spectrum of a random matrix is simple  with high probability ?} 

\vskip2mm It is easy to see that if the distribution $\xi$ is continuous, then the spectrum is simple with probability $1$. On the other hand, the discrete case is far from 
trivial.  In particular,  the following conjecture of 
Babai  has been open since the 1980s  \cite{BBcon}. 

\begin{conjecture}  \label{conjecture:Babai} 
With probability $1-o(1)$, $G(n, 1/2)$ has a simple spectrum. 
\end{conjecture}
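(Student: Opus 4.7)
The plan is to deduce the conjecture from a more general result about matrices $M_n$ as in Definition \ref{model}: under a mild non-degeneracy hypothesis (certainly satisfied by the Bernoulli$(1/2)$ distribution), $M_n$ has simple spectrum asymptotically almost surely. The adjacency matrix of $G(n,1/2)$ fits Definition \ref{model} with $\xi \sim \mathrm{Bernoulli}(1/2)$ and identically zero diagonal, so the conjecture follows from this more general statement.

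I would set up a conditioning-on-the-minor argument. Write
\[
M_n = \begin{pmatrix} M_{n-1} & X \\ X^T & \xi_{nn} \end{pmatrix},
\]
where $X = (\xi_{1n}, \ldots, \xi_{n-1,n})^T$ is independent of $M_{n-1}$. By Cauchy interlacing, any repeated eigenvalue $\lambda$ of $M_n$ must also be an eigenvalue of $M_{n-1}$. A short computation with the eigenvector equation shows that if $\lambda = \lambda_j(M_{n-1})$ is a \emph{simple} eigenvalue of $M_{n-1}$ with unit eigenvector $u_j$, and is simultaneously an eigenvalue of $M_n$, then $\langle X, u_j\rangle = 0$. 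So, granting that $M_{n-1}$ has simple spectrum with high probability, it suffices to establish
\[
\Pr\!\left(\exists\, j \in \{1, \ldots, n-1\}:\ \langle X, u_j\rangle = 0\right) = o(1).
\]

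Fix $j$ and condition on $M_{n-1}$, so that $u_j$ is fixed. For $X$ with i.i.d.\ Bernoulli$(1/2)$ entries, the classical Erd{\H o}s--Littlewood--Offord inequality only yields $\Pr(\langle X, u_j\rangle = 0) = O(n^{-1/2})$, which is far too weak to survive a union bound over $n-1$ eigenvectors. The crucial upgrade is an inverse Littlewood--Offord theorem (in the spirit of Tao--Vu): if this small-ball probability exceeds $n^{-A}$ for a large fixed $A$, then $u_j$ must have strong additive structure, with most coordinates lying in a short generalized arithmetic progression. The key step is then to show that with probability $1-o(1)$, \emph{no} eigenvector of $M_{n-1}$ possesses such structure. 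This genericity / delocalization statement for eigenvectors of random symmetric matrices I would prove by a swapping argument that resamples one or two entries of $M_{n-1}$ while tracking how the eigenvectors move, thereby decoupling $u_j$ from the randomness used to certify anti-concentration.

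Once the per-eigenvector bound $\Pr(\langle X, u_j\rangle = 0) \leq n^{-A}$ is in hand for an arbitrary fixed $A$, a union bound over $j$ gives $n \cdot n^{-A} = o(1)$, completing the argument. The real obstacle by far is the no-additive-structure claim for eigenvectors of $M_{n-1}$; the interlacing reduction, the orthogonality identity, and the final union bound are comparatively soft. A further wrinkle is that a naive induction on $n$ to control simplicity of $M_{n-1}$ loses a factor per step, so one must work directly at scale $n$, choosing $A$ large enough that the accumulated failure probability remains negligible; any polynomial small-ball rate makes this automatic.
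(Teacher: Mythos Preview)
Your high-level architecture matches the paper exactly: the minor decomposition, the observation that a repeated eigenvalue forces $X$ to be orthogonal to an eigenvector of $M_{n-1}$, the dichotomy via inverse Littlewood--Offord into ``structured'' versus ``unstructured'' eigenvectors, and the final union bound over $j$ once one has a per-eigenvector small-ball bound of $n^{-A}$. (One small cleanup: the paper's deterministic lemma shows directly that non-simplicity of $M_n$ implies $X\perp$ some eigenvector of $M_{n-1}$, with no inductive assumption on the simplicity of $M_{n-1}$; your version routes through an induction, which you correctly note can be made to close, but is unnecessary.)

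The real divergence is in how you propose to execute the hard step, namely showing that with high probability no eigenvector of $M_{n-1}$ is ``rich'' (i.e.\ has GAP structure). You suggest a swapping/resampling argument that perturbs a few entries of $M_{n-1}$ to decouple $u_j$ from the randomness. This is too vague to assess, and I do not see how to make it work: eigenvectors depend globally on all entries, GAP membership is not stable under the resulting perturbation in any obvious way, and resampling $O(1)$ entries does not produce enough independent randomness to beat the entropy of possible GAP structures. The paper does \emph{not} use a swapping argument here.

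What the paper actually does is an entropy-versus-probability count over GAP data. First it proves a refinement of the inverse theorem (their Theorem~\ref{struct-rich}): a rich vector has all but $n^{1-\eps/4}$ coordinates in a GAP $P$ of rank $d\le d_0$ and volume $\ll p^{-1}n^{-d/2}$, and moreover one can single out a small subset $W'$ of size $\le \eps n$ with $p_\xi(W')\le n^{O(\eps)}p$. Writing the in-GAP coordinates as $V'=\sum_{i=1}^d w_i V'_{(i)}$ with integer vectors $V'_{(i)}$, the number of choices for the tuple $(V'_{(1)},\ldots,V'_{(d)})$ is at most $(\mathrm{vol}\,P)^n \le O(p^{-1}n^{-d/2})^n$. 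For each fixed such tuple, the eigenvector equation forces $M_{n'}V'$ into a subspace $H$ of dimension $O(n^{1-\eps/4})$; projecting away $H$ and using the rows of the off-diagonal block against the coordinates in $W'$ gives, via the anticoncentration of $W'$, a per-row success probability $\le n^{O(\eps)}p$, hence a total probability $\le (n^{O(\eps)}p)^{(1-O(\eps))n}$. The unknown real generators $w_1,\ldots,w_d$ are handled by a Koml\'os-type conditioning: fix $d$ rows to pin down the kernel of a $d\times d$ minor, making a deterministic choice of $(w_i)$ available at an entropy cost of only $n^{d}$. Multiplying entropy by probability yields $n^{-dn/2 + O(\eps n)}$, which is $\exp(-cn)$ for $d\ge 1$ and $\eps$ small. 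The case $d=0$ (eigenvector mostly zero) is handled separately by the classical Koml\'os rank argument. This counting scheme, and in particular the refined inverse theorem supplying the anticoncentrated subset $W'$, is the genuine content you are missing.
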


%Babai mentioned this conjecture in a private conversation with the second author in 2012. 

In \cite{Babai1}, Babai, Grigoriev and Mount  showed that the notorious  graph isomorphism problem is in $P$ within the class of graphs with simple spectrum. 
Conjecture \ref{conjecture:Babai}, if holds, implies that most graphs belong to this class. 
From universality results \cite{ekyy} on the gap between adjacent eigenvalues, one can show that with probability $1-o(1)$, that \emph{most} (i.e. $(1-o(1))n$) of the eigenvalues of $G(n,1/2)$ are simple; however, the error terms in these universality results do not appear to be strong enough to resolve Babai's conjecture completely.

\vskip2mm

 Our main result provides a positive answer to the question above. We say that a real-valued random variable $\xi$ is \emph{non-trivial} if there is a fixed $\mu >0$ (independent of $n$) such that 
\begin{equation}\label{xixm}
\P( \xi =x) \le 1 -\mu
\end{equation}
 for all $x  \in \R$.  In particular, any random variable independent of $n$ is non-trivial if it is not deterministic (i.e. it does not take a single value almost surely).  The distribution $\xi$ for the adjacency matrix of $G(n,p)$ will be non-trivial if $p$ stays bounded away from both $0$ and $1$ (and in particular if $0 <p < 1$ is a fixed value such as $1/2$).

\begin{theorem}[Simple Spectrum] \label{main-thm}   Let $M_n$ be a random matrix of the form in Definition \ref{model} whose upper triangular entries have non-trivial 
distribution for some fixed $\mu>0$.  Then for every fixed $A > 0$ and  $n$ sufficiently large (depending on $A,\mu$),  the spectrum of  $M_n$ is simple with probability  at least   $1- n^{-A}$. 
\end{theorem}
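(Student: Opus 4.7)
My plan is to use a block-decomposition argument combined with anti-concentration estimates. Write $M_n$ in block form
$$M_n = \begin{pmatrix} M_{n-1} & X \\ X^T & \xi_{nn} \end{pmatrix},$$
where $M_{n-1}$ is the top-left $(n-1) \times (n-1)$ principal minor and $X = (\xi_{1n},\dots,\xi_{n-1,n})^T \in \R^{n-1}$; crucially, $X$ is jointly independent of $M_{n-1}$. If $M_n$ has a repeated eigenvalue $\lambda$, then the corresponding eigenspace has dimension $\geq 2$, so by linear algebra it contains a nonzero vector whose last coordinate vanishes. Writing this as $(u,0)$ with $u\in\R^{n-1}$, the eigenvalue equation $M_n(u,0)^T=\lambda(u,0)^T$ forces $M_{n-1}u=\lambda u$ and $X^T u = 0$.

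\textbf{Conditioning on $M_{n-1}$.} Using induction on $n$ (or a bootstrapping argument), I may assume $M_{n-1}$ has simple spectrum with probability at least $1-n^{-A-1}$. Conditioning on the event that it does, its eigenvectors $u_1,\dots,u_{n-1}$ are uniquely determined up to sign, and $X$ retains its distribution of $n-1$ jointly independent entries with law $\xi$. It then suffices to establish that
$$\Pr\bigl(X^T u_j = 0 \,\bigm|\, M_{n-1}\bigr) \leq n^{-A-2}$$
holds for every $j$ with probability at least $1 - n^{-A-1}$ over the choice of $M_{n-1}$, since a union bound over $j\in\{1,\dots,n-1\}$ and over this conditioning event then yields the desired $n^{-A}$ bound.

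\textbf{Anti-concentration and eigenvector structure.} The classical Erd\H{o}s-Littlewood-Offord inequality only gives $\sup_s \Pr(X^T u = s) = O(n^{-1/2})$ when $u$ has all nonzero coordinates, which is vastly too weak. To gain the needed polynomial savings, I would invoke an \emph{inverse} Littlewood-Offord theorem in the style of Tao-Vu: if $\sup_s \Pr(X^T u = s) \geq n^{-A-2}$, then (after rescaling) $u$ must lie near a generalized arithmetic progression of small rank and volume, i.e.\ it carries a rigid additive structure. It thus remains to show that with high probability, no eigenvector of $M_{n-1}$ is so structured. A useful preliminary input is $\ell^\infty$ delocalization $\|u_j\|_\infty \leq n^{-1/2+o(1)}$, which prevents the trivial failure mode of $u$ being supported on $O(1)$ coordinates.

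\textbf{Main obstacle.} The hardest step will be ruling out structured eigenvectors. The natural strategy is an $\eps$-net plus swapping argument: build a net of candidate ``structured'' unit vectors $u$ (parametrized by small-volume GAPs), and for each fixed candidate, show that the event ``$u$ is an approximate eigenvector of $M_{n-1}$'' is extremely unlikely. This is accomplished by resampling a small block of entries in a row and column of $M_{n-1}$ and using anti-concentration of the freshly resampled entries against $u$ itself --- essentially running the same Littlewood-Offord inequality in the reverse direction --- so that a structured $u$ cannot remain an eigenvector after the swap. Controlling the net size against the gain from anti-concentration, and piecing together the conditional bound with the inductive hypothesis, then completes the proof.
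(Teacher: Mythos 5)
Your block decomposition and the reduction to ``$X$ orthogonal to an eigenvector of $M_{n-1}$'' are exactly the paper's starting point, and invoking inverse Littlewood--Offord theory to characterize the bad event is the right key lemma. (One minor deviation: the paper does not need to induct on $M_{n-1}$ having simple spectrum --- it simply bounds the probability that $X$ is orthogonal to \emph{some} eigenvector --- but your inductive conditioning would also work with a bit of bookkeeping.) So up through the reduction to ruling out structured eigenvectors, you are following essentially the same route as the paper.

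The gap is in the step you yourself flag as the main obstacle. Your plan --- an $\eps$-net over structured candidate vectors plus a resampling/swapping argument --- is too vague to close, and it misses the two technical ideas that make the paper's argument go through. First, the generators $w_1,\dots,w_d$ of the GAP are arbitrary reals, so the family of candidate structured vectors is genuinely uncountable and no literal net argument suffices; the paper deals with this by a Koml\'os-style conditioning trick, selecting a $d\times d$ minor whose kernel determines the $(w_1,\dots,w_d)$ up to a \emph{finite} entropy cost of about $n^d$ choices, and then treating the kernel representative as deterministic. Second, even after fixing the GAP, a naive union bound over the $O(p^{-1}n^{-d/2})^n$ integer-combination choices does not beat the anti-concentration gain unless one has a sub-vector of the eigenvector whose own concentration probability is controlled; the paper proves a refined inverse theorem (a ``finer structure theorem,'' established by iterating the basic inverse Littlewood--Offord theorem) producing both a GAP containing most coordinates and a small subset $W'$ with $p_\xi(W')\le n^{d_0\eps}p$, which is precisely what feeds the row-by-row anti-concentration bound. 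Your $\ell^\infty$-delocalization input plays no role in the paper and would not substitute for either of these. The proposal identifies the right ingredients at a high level but does not supply the mechanism that balances entropy against anti-concentration, which is where the actual difficulty and the paper's novelty lie.
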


In the case when $M_n$ is the adjacency matrix of  $G(n,1/2)$, $\mu =1/2$ and Theorem \ref{main-thm} implies 

\begin{corollary} 
Conjecture \ref{conjecture:Babai} holds. 
\end{corollary}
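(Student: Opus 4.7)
The plan is to combine a \emph{minor trick} with the \emph{inverse Littlewood-Offord} small-ball machinery. Suppose $M_n$ has an eigenvalue $\lambda$ of multiplicity at least two. Then the corresponding eigenspace has dimension $\geq 2$, and thus meets the hyperplane $\{v_n = 0\}$ non-trivially; pick a nonzero $\lambda$-eigenvector $v = (u, 0)^\top$ of $M_n$, where $u \in \R^{n-1}$ is nonzero. Splitting $M_n$ into its top-left $(n-1)\times(n-1)$ principal minor $M_{n-1}$, the last off-diagonal row $X := (\xi_{n,1}, \dots, \xi_{n,n-1})$, and corner $\xi_{nn}$, the eigen-equation $M_n v = \lambda v$ decouples into
\begin{equation*}
M_{n-1} u = \lambda u \qquad \text{and} \qquad X \cdot u = 0.
\end{equation*}
By Definition \ref{model}, $X$ is independent of $M_{n-1}$ (and hence of its eigenvectors), so it suffices to show that with probability at least $1 - n^{-A}$, none of the $n-1$ unit eigenvectors of $M_{n-1}$ is orthogonal to $X$.

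Conditioned on $M_{n-1}$, we need to bound $\P(X \cdot u = 0 \mid M_{n-1})$ for each of its unit eigenvectors $u$. The naive Erd\H{o}s-Littlewood-Offord bound yields only $O(n^{-1/2})$, far too weak to survive a union bound over the $n-1$ eigenvectors. The key tool is the \emph{inverse Littlewood-Offord theorem}: for any fixed $C$, if $\P(X \cdot u = 0) \geq n^{-C}$, then (after suitable discretization) $u$ must be \emph{structured}, in the sense of lying within a small neighbourhood of a generalized arithmetic progression of bounded rank and polynomially bounded volume. The non-triviality hypothesis \eqref{xixm} provides exactly the concentration input needed to invoke this machinery. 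Crucially, the set of such structured unit vectors admits an $\varepsilon$-net of only polynomial size $n^{O(1)}$, so we can afford to union bound over it.

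The substantive obstacle is to show that, with probability $1 - O(n^{-A-1})$, \emph{no} eigenvector of $M_{n-1}$ is structured in the above sense. The plan is a swapping/resampling argument: for each candidate $w$ in the polynomial-size net of structured vectors, bounding the probability that $M_{n-1}$ has an approximate eigenvector near $w$ reduces, after conditioning on all but one row of $M_{n-1}$, to a small-ball estimate for an inner product $\sum_j \xi_{ij} w_j$ hitting a specified target. Standard Littlewood-Offord (using \eqref{xixm}) makes this probability $\le n^{-D}$ for any desired $D$. Union-bounding over the net and the $n$ rows rules out structured eigenvectors, and a further union bound over the $n-1$ eigenvectors, combined with the small-ball dichotomy above, completes the argument. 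The main subtlety is calibrating the ``structured'' threshold and ensuring the swapping argument remains quantitatively effective across the \emph{entire} spectrum---including the outlier eigenvalue of $G(n, 1/2)$ and other edge regimes, where the bulk eigenvector delocalization results of \cite{ekyy} are not strong enough. This is where the real technical work lies, and it is likely to require a bootstrap or self-improving iteration rather than a direct application of off-the-shelf delocalization estimates.
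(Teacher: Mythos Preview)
Your high-level strategy matches the paper's: the minor trick reducing to orthogonality of $X$ with an eigenvector of $M_{n-1}$, the dichotomy via inverse Littlewood--Offord into ``structured'' versus ``unstructured'' eigenvectors, and then ruling out structured eigenvectors by an entropy-versus-probability argument. However, two of your quantitative claims are incorrect, and together they hide the real content of the proof.

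First, the set of structured vectors does \emph{not} admit an $\eps$-net of polynomial size. The inverse theorem says that most coordinates of a rich vector lie in a GAP $P$ of bounded rank and volume $O(p^{-1}n^{-d/2})$; but each of the $n$ coordinates can be any element of $P$, so the entropy is $O(p^{-1}n^{-d/2})^n$, which is superpolynomial. The paper pays exactly this exponential entropy cost and must earn it back row by row.

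Second, your single-row conditioning cannot give $\P(\sum_j \xi_{ij} w_j = \text{target}) \le n^{-D}$ for arbitrary $D$: by the very definition of $w$ being structured, this small-ball probability is at least $n^{-A}$, and ``standard Littlewood--Offord'' gives at best $O(n^{-1/2})$. Moreover the target $\lambda w_i$ depends on the eigenvalue $\lambda$, which is not deterministic after conditioning on the other rows.

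The paper resolves both issues with a genuinely new ingredient (Theorem~\ref{struct-rich}): a \emph{refined} structure theorem asserting that inside any rich vector one can find a small subset $W'$ of size $\le \eps n$ with $p_\xi(W') \le n^{O(\eps)} p$. One then conditions on the rows indexed by $W'$ last, so that for each of the $\approx n$ remaining rows the relevant inner product is with the \emph{fixed} vector supported on $W'$, giving a per-row bound of $n^{O(\eps)} p$. Multiplying over $\approx n$ rows yields $(n^{O(\eps)} p)^n$, which beats the entropy $(p^{-1}n^{-d/2})^n$ by $n^{-dn/2 + O(\eps n)}$ since $d\ge 1$. The eigenvalue is handled by noting that $M_{n'}V'$ lies in a low-dimensional deterministic subspace (spanned by the GAP generators and a few columns), which is quotiented out. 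Your ``bootstrap or self-improving iteration'' intuition is on the right track---that is precisely how Theorem~\ref{struct-rich} is proved---but the iteration lives in the structure theorem, not in the eigenvector-ruling-out step.
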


%\vskip2mm 

The rest of the paper is devoted to the proof of Theorem \ref{main-thm}.  One can easily extend Theorem \ref{main-thm}  (with the same proof)  to more general models where the entries are independent but not iid and also to random Hermitian matrices; see Section \ref{section:remarks}. 

\section{ From multiple eigenvalues to structured eigenvectors} 

The first step in our proof is to reduce the non-simple spectrum problem  to a 
problem about the structure of eigenvectors. 

For a symmetric matrix $M_n$ (either deterministic or random) of size $n$, write
\begin{equation}\label{mform}
M_n = \begin{pmatrix} M_{n-1} & X \\ X^* & \xi_{nn} \end{pmatrix}
\end{equation}
where $X =(x_1, \dots, x_{n-1} )   \in \R^{n-1}$ is the column vector.   We need the following (deterministic) lemma:

\begin{lemma}  Let $M_n$ be a real symmetric matrix of the form \eqref{mform}.
Assume that the spectrum of $M_n$ is not simple. Then $X$ is orthogonal to an eigenvector of $M_{n-1}$. \end{lemma}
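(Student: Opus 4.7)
The plan is to use the block structure in \eqref{mform} together with the assumption of a repeated eigenvalue to produce an eigenvector of $M_n$ whose last coordinate vanishes; the first $n-1$ coordinates of such a vector will automatically be an eigenvector of $M_{n-1}$ orthogonal to $X$.

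More concretely, suppose $\lambda$ is an eigenvalue of $M_n$ of multiplicity at least $2$, and pick two linearly independent eigenvectors $v_1, v_2 \in \R^n$ corresponding to $\lambda$. Write each $v_i = \binom{u_i}{c_i}$ with $u_i \in \R^{n-1}$ and $c_i \in \R$. The two scalars $c_1, c_2$ cannot both be nonzero independently of the $u_i$: I will choose $(\alpha, \beta) \in \R^2 \setminus \{0\}$ so that $\alpha c_1 + \beta c_2 = 0$ (this is a single linear equation, so a nontrivial solution exists). Set $v := \alpha v_1 + \beta v_2 = \binom{u}{0}$ with $u := \alpha u_1 + \beta u_2$. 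Since $v_1, v_2$ are linearly independent, $v \neq 0$, and since the last coordinate of $v$ is zero, this forces $u \neq 0$.

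Now I unpack the eigenvalue equation $M_n v = \lambda v$ using the block form \eqref{mform}:
\begin{equation*}
\begin{pmatrix} M_{n-1} & X \\ X^* & \xi_{nn} \end{pmatrix}
\begin{pmatrix} u \\ 0 \end{pmatrix}
=
\begin{pmatrix} M_{n-1} u \\ X^* u \end{pmatrix}
=
\lambda \begin{pmatrix} u \\ 0 \end{pmatrix}.
\end{equation*}
The top block gives $M_{n-1} u = \lambda u$, so $u$ is an eigenvector of $M_{n-1}$ (with eigenvalue $\lambda$), while the bottom block gives $X^* u = 0$, i.e.\ $X$ is orthogonal to $u$. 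This is precisely the desired conclusion.

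There is essentially no obstacle beyond the elementary linear algebra above; the only point requiring any care is verifying that the chosen combination $v = \alpha v_1 + \beta v_2$ is nonzero, which is immediate from the linear independence of $v_1$ and $v_2$. The probabilistic content of Theorem \ref{main-thm} will then enter later, when one shows that for the random model of Definition \ref{model} the random vector $X$ is, with high probability, not orthogonal to \emph{any} eigenvector of the (independent of $X$) matrix $M_{n-1}$.
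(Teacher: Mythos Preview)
Your proof is correct. The argument in the paper is a little different: there one diagonalizes $M_{n-1}$, invokes Cauchy interlacing to conclude that the repeated eigenvalue $\lambda$ of $M_n$ must coincide with some eigenvalue $\lambda_i$ of $M_{n-1}$, and then performs a small case split (either the corresponding coordinate $x_i$ of $X$ vanishes, or the last coordinate $v_n$ of the eigenvector vanishes) to reach the same conclusion. Your route is more direct: by working with two independent eigenvectors and cancelling the last coordinate, you avoid both the appeal to interlacing and the case split. The paper's argument has the minor feature of using only a single eigenvector of $M_n$ (so in principle it would apply to non-diagonalizable settings), but for real symmetric matrices that distinction is irrelevant, and your version is arguably the cleaner of the two.
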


\begin{proof} 
We can  change basis in $\R^{n-1}$ so that the standard basis $e_1,\ldots,e_{n-1}$ is an orthonormal eigenbasis of $M_{n-1}$;  $M_{n-1}$ is now a diagonal matrix with entries
$\lambda_1, \dots, \lambda_{n-1}$.  If $M_n$ has  a multiple eigenvalue, then one of the $\lambda_i$ 
is an eigenvalue of $M_n$. Assume (without loss of generality) that it is $\lambda_1$ and let $v = (v_1, \dots, v_n)$ be a corresponding eigenvector. The first row of the equation 
$M_n v = \lambda_1 v$ implies that $x_1 v_n =0$.  If $x_1=0$ then $X$ is orthogonal to $e_1$. If $v_n=0$, then $v' := (v_1, \dots, v_{n-1})$ is an eigenvector of $M_{n-1}$ and 
the last row of the equation  $M_n v = \lambda_1 v$ implies that $v' \cdot X  = \lambda_1 v_n =0$, proving the lemma. 
\end{proof}

In view of this lemma, Theorem \ref{main-thm} clearly follows from the following statement. 

\begin{proposition}\label{prop-1}  Let the notation and hypotheses be as in Theorem \ref{main-thm}, and expand $M_n$ as \eqref{mform}. 
Let $E_1$ be the event that $X$ is orthogonal to a non-trivial eigenvector of $M_{n-1}$.  Then $\P(E_1) \ll n^{-A}$.
\end{proposition}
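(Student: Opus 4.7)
The plan is to condition on $M_{n-1}$ (which, by Definition \ref{model}, is independent of the column $X$), let $v_1,\dots,v_{n-1}$ be an orthonormal eigenbasis of $M_{n-1}$, and take a union bound: writing $E_{1,j}$ for the event $\{X \cdot v_j = 0\}$, it suffices to show $\P(E_{1,j} \mid M_{n-1}) \ll n^{-A-1}$ with probability at least $1-O(n^{-A-1})$ over $M_{n-1}$, for each $j$. Since $X$ has iid entries drawn from the non-trivial distribution $\xi$, each event $E_{1,j}$ is a small-ball event for a linear combination $\sum_i x_i (v_j)_i$ whose coefficients are the entries of the fixed (after conditioning) unit vector $v_j$.

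The first tool I would bring in is the Erd\H os--Littlewood--Offord inequality, extended to general non-trivial $\xi$ via Esseen's concentration inequality. This yields $\P(X \cdot v = 0) \ll k^{-1/2}$ where $k$ is the number of nonzero entries of $v$; since $\|v\|_2 = 1$ forces at least $1/\|v\|_\infty^2$ nonzero entries, any eigenvector with $\|v\|_\infty \le n^{-c}$ already gives $\P(X \cdot v = 0) \ll n^{-c}$. To get down to the required polynomial bound $n^{-A-1}$, I would instead invoke the \emph{inverse Littlewood--Offord theorem} of the authors: if the small-ball probability $\rho(v) := \sup_t \P(X \cdot v = t)$ exceeds $n^{-C}$, then $v$ must lie very close to a generalized arithmetic progression of bounded rank and polynomial volume. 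Thus vectors which fail the strong anti-concentration bound form a structured, combinatorially thin collection.

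It then remains to rule out structured eigenvectors of $M_{n-1}$. Here I would apply a swapping/row-exposure argument in the Tao--Vu style: write $M_{n-1}$ in the block form analogous to \eqref{mform} with last column $X'$ and inner matrix $M_{n-2}$; condition on $M_{n-2}$ and on a candidate structured vector $v'$; observe that for $v'$ (suitably extended) to be an eigenvector of $M_{n-1}$ the column $X'$ must itself satisfy a linear constraint of the form $X' \cdot w = c$ for vectors $w,c$ determined by $M_{n-2}$ and $v'$, to which the same Littlewood--Offord anti-concentration applies. Taking an $\eps$-net over the structured collection — whose cardinality is controlled by the inverse theorem — and union bounding finishes the reduction.

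The main obstacle I expect is precisely the handling of structured eigenvectors. Since $\xi$ is allowed to be discrete, nothing a priori prevents $M_{n-1}$ from having a highly localized or arithmetically structured eigenvector, and simply discarding them is not an option. Getting the inverse Littlewood--Offord machinery to mesh cleanly with the row-swapping step — ensuring that the linear constraint produced by the swap is non-degenerate and that the net over structured vectors is small enough to survive the union bound — is the true technical core. A secondary difficulty is eigenvectors supported on very few coordinates, for which one may need to union bound separately over the possible support patterns before applying the anti-concentration estimate.
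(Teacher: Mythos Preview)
Your high-level strategy matches the paper's exactly: reduce to showing that eigenvectors of $M_{n-1}$ are not ``rich'' (i.e.\ do not have large concentration probability), characterize rich vectors via inverse Littlewood--Offord, and then argue that a random matrix is unlikely to have such a structured eigenvector. You also correctly single out the last step as the crux.

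Where your sketch falls short is in the quantitative mechanics of that last step. The inverse theorem places a rich unit vector (mostly) inside a GAP $P$ of bounded rank and volume $O(p^{-1}n^{-d/2})$, but the collection of such vectors is not small: even after fixing the GAP, there are roughly $|P|^{n}$ ways to choose the coordinates, i.e.\ an entropy of order $(p^{-1}n^{-d/2})^{n}$. Against this, exposing a \emph{single} column $X'$ of $M_{n-1}$ produces one linear constraint, hence a probability saving of at most $n^{-O(1)}$, which cannot beat an exponential entropy. To close the union bound one must expose $\Theta(n)$ independent rows and extract a factor of roughly $p$ from each, so that the total probability $(n^{O(\eps)}p)^{n}$ cancels the $(p^{-1})^{n}$ in the entropy and leaves the gain $n^{-dn/2}$. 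The paper achieves this by first proving a \emph{refined} structure theorem (Theorem~\ref{struct-rich}): not only does most of $V$ lie in $P$, but there is a small sub-multiset $W'\subset V$ of size $\le \eps n$ with $p_\xi(W')\le n^{O(\eps)}p$. One then writes the eigenvector equation as $M_{n'}V'\in H$ for a low-dimensional subspace $H$, conditions on everything except the $(n'-k)\times k$ block $D$ whose columns are indexed by $W'$, and observes that each of the $\sim n$ remaining rows of $D^*$ imposes an independent constraint of the form $R_i\cdot (w_1V''_1+\cdots+w_dV''_d)=x_i$, each holding with probability at most $n^{O(\eps)}p$ by the anti-concentration of $W'$.

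A second issue your $\eps$-net idea does not address is that the GAP $P$ is specified by \emph{real} generators $w_1,\dots,w_d$, so even after pinning down the integer coordinate vectors $V'_{(1)},\dots,V'_{(d)}$, the candidate eigenvector $V'=\sum_i w_i V'_{(i)}$ ranges over an uncountable family. The paper sidesteps this not by netting over the $w_i$ but by a Koml\'os-type conditioning: one first freezes $d$ rows of the relevant block to make the $d\times d$ minor $U_d$ deterministic, then chooses $(w_1,\dots,w_d)$ deterministically from $\ker U_d$, after which the remaining $\sim n$ row constraints become genuine independent events. Without this device (or something equivalent) the union bound over structured vectors cannot be carried out.
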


It remains to prove this proposition.
The crucial  point here is that $X$ and $M_{n-1}$ are independent of each other.  Fix a constant  $A>0$ and call a vector $v \in \R^{n}$ \emph{rich} if we have
$$ \sup_{x \in \R} \P( X \cdot v = x) \geq n^{-A}$$  where  $X \in \R^{n}$ is a random vector whose entries are iid copies of $\xi$.   Let $E_{2,n-1}$ be the event that an eigenvector of $M_{n-1}$ is rich. 
We have 

$$\P (E_1)  = \P (E_1| \bar E_{2,n-1}) \P (\bar E_{2,n-1})  + \P( E_1| E_{2,n-1}) \P( E_{2,n-1})  \le n^{-A}+ \P( E_{2,n-1}) , $$ 
by the definition of $E_1$ and $E_{2,n-1}$.  To prove Proposition \ref{prop-1}, it therefore suffices to prove 

\begin{proposition}[Rich eigenvectors are rare]  \label{prop-2}  Let the notation and hypotheses be as above.  Then $\P(E_{2,n-1})  \ll n^{-A}$.  
\end{proposition}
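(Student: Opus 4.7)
The plan is to use inverse Littlewood--Offord theory to classify rich unit vectors as structured, cover them by a small metric $\varepsilon$-net, and apply a union bound. Recall that a unit vector $v\in\R^{n-1}$ is rich if $\sup_{x\in\R}\P(X\cdot v=x)\geq n^{-A}$, where $X$ has iid entries distributed as the non-trivial random variable $\xi$. The inverse Littlewood--Offord theorem of Tao--Vu, as refined by Nguyen--Vu, asserts that every such rich vector is, up to a small number of exceptional coordinates, approximated by a vector whose entries lie in a generalized arithmetic progression of volume $n^{O_A(1)}$. Consequently, the set of rich unit vectors admits a metric $\varepsilon$-net $\CN\subset\R^{n-1}$ of size $|\CN|\leq n^{O_A(1)}$ at some polynomially small scale $\varepsilon=n^{-B(A)}$.

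\textbf{Fixed-vector estimate.} Combining this net with the overwhelming-probability polynomial bound on $\|M_{n-1}\|$ and the continuity of eigenvectors under small perturbations, it suffices to show that for each fixed $\tilde v\in\CN$, the probability that some eigenvector of $M_{n-1}$ lies within $\varepsilon$ of $\tilde v$ is $\ll n^{-2A}$. To prove this, I would expose one row at a time: delete a row $i$ and its corresponding column from $M_{n-1}$ to obtain a minor $M'$, and condition on $M'$ and $\xi_{ii}$. The iid entries $\xi_{ij}$, $j\neq i$, remain free, and if $(v,\lambda)$ is an eigenpair of $M_{n-1}$ then the $i$-th row of the eigenvalue equation reads $\sum_{j\neq i}\xi_{ij}v_j = (\lambda-\xi_{ii})v_i$. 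If $v$ is close to $\tilde v$ and $\tilde v$ has a linear number of coordinates of comparable magnitude, the Erd\H os--Littlewood--Offord inequality (which applies because $\xi$ is non-trivial) forces this linear constraint to hold with probability $O(n^{-1/2})$, after a suitable discretization that makes $v_i$, $v_j$, and $\lambda$ effectively constants. Iterating across $\Theta(A)$ essentially disjoint rows gives the required $n^{-2A}$ decay.

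\textbf{Main obstacle.} The delicate case, which I expect to be the crux of the argument, is when $\tilde v$ is \emph{compressible} --- approximately supported on $o(n)$ coordinates --- for then the row-wise Littlewood--Offord estimate gives no quantitative gain, and moreover the net $\CN$ itself becomes uncontrollably large on the compressible regime. A separate compressibility elimination argument is therefore needed, showing that $M_{n-1}$ essentially never admits a compressible eigenvector. One natural route is a direct swap argument: resample the $O(1)$ entries of $M_{n-1}$ lying in the small support of a putative eigenvector and invoke anticoncentration to break the eigenvector relation; alternatively one can reduce to an anticoncentration statement for the event that a small principal minor of $M_{n-1}$ shares an eigenvalue with the full matrix. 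A related subtlety is the apparent circularity that $v$ and $\lambda$ are genuinely functions of the matrix and cannot be treated as independent of the entries being resampled; the net in the first paragraph is precisely what is designed to decouple this dependence.
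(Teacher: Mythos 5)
Your high-level instinct is right (inverse Littlewood--Offord plus union bound), but the quantitative accounting is off by an exponential factor, and this is not a gap that can be patched within your framework.

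The central error is the claim that the set of rich unit vectors admits a net of polynomial size $|\CN|\le n^{O_A(1)}$. The structure theorem says that a rich vector $V$ is, up to $o(n)$ exceptional coordinates, of the form $V = w_1 V_{(1)}+\cdots+w_d V_{(d)}$ where $V_{(1)},\dots,V_{(d)}$ are integer vectors with entries bounded by the GAP dimensions $N_1,\dots,N_d$ (which are $n^{O(1)}$), and $w_1,\dots,w_d$ are \emph{arbitrary real} generators. For fixed integer coefficient vectors $V_{(i)}$, this gives a $d$-dimensional family of directions, fine; but the number of choices of the $(V_{(i)})_{i=1}^d$ is $\prod_i (2N_i+1)^{n'} = n^{\Theta(n)}$, and generically different choices give genuinely different directions. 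So no $\varepsilon$-net of size $n^{O(1)}$ exists; the correct entropy is exponential. Consequently a fixed-vector estimate of $n^{-2A}$ (from $\Theta(A)$ rows, each contributing $n^{-1/2}$) is nowhere near enough to survive the union bound; you need a bound that is itself exponentially small in $n$. This is not a difficulty localized to the compressible regime as you suggest --- it already occurs at full rank $d=1$.

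The paper's proof deals with exactly this. It \emph{pays} the exponential entropy cost $O(p^{-1}n^{-d/2})^n$ for the integer coefficient vectors $V_{(i)}$, and then must win back exponentially much. The wins come from two places you don't have. First, Theorem \ref{struct-rich} produces not only the GAP containment but also a small subset $W'\subset V$ on $k=O(\varepsilon n)$ coordinates with $p_\xi(W')\le n^{O(\varepsilon)}p$ --- i.e., a designated block of coordinates on which the anticoncentration constant is essentially the full richness parameter $p$, not merely the Erd\H{o}s--Littlewood--Offord rate $n^{-1/2}$. This gives per-row probability $\approx p$ on each of $\approx n$ rows, i.e., a factor $p^{n-O(\varepsilon n)}$, which does cancel $p^{-n}$ up to terms controllable by taking $\varepsilon$ small and using $d\ge 1$. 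Second, the uncountable real generators $w_1,\dots,w_d$ cannot be enumerated or netted at all; the paper resolves this by a Koml\'os-type conditioning argument: it exposes a $d\times d$ minor $U_d$ of the relevant random matrix, notes that the kernel (hence an admissible choice of $(w_1,\dots,w_d)$) is determined by $U_d$, and thereby replaces the uncountable choice by a deterministic-after-conditioning one at an entropy cost of only $n^d$. Your approach has no analogue of either device, and the "continuity of eigenvectors under small perturbation" step also needs care (eigenvector stability degrades near eigenvalue collisions, which is exactly the regime under study), though that is a secondary issue compared to the entropy miscount.

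\end{document}
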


In fact we will prove the stronger claim
\begin{equation}\label{stronger}
 P(E_{2,n}) \ll \exp( - c n )
\end{equation}
for some fixed $c>0$ (depending on $\sigma$), where $E_{2,n}$ is the event that an eigenvector of $M_n$ is rich; Proposition \ref{prop-2} then follows by replacing $n$ with $n-1$.

The key ingredient  of our proof of Proposition \ref{prop-2} is the so-called  \emph{inverse Littlewood-Offord theory}, introduced in \cite{TVinverse} (see \cite{NVsur} for a survey) 
which established almost completely  the  structure of rich vectors.  On the other hand, one expects that eigenvectors of a random matrix must look random, and 
thus should not attain any rigid structure. This explains the intuition behind  Proposition \ref{prop-2}.  The actual proof, 
 however, requires some novel ideas and  delicate 
arguments, and will be the subject of the next two sections.

\section{Inverse Littlewood-Offord theory}

We recall the definition of a (symmetric) \emph{generalized arithmetic progression} (GAP):

\begin{definition}
A set $P \subset \R$  is a \emph{symmetric GAP of rank $r$} if it can be expressed  in the form
$$P= \{m_1g_1 + \dots +m_r g_r: -M_i \le m_i \le M_i, m_i\in \Z \hbox{ for all } 1 \leq i \leq r\}$$
 for some $r \geq 0$, $g_1,\ldots,g_r\in \R$  and some real numbers $M_1,\ldots,M_r$. 
\end{definition} 

It is convenient to think of $P$ as the image of an integer box $B:= \{(m_1, \dots, m_r) \in \Z^r: - M_i \le m_i \le M_i \} $ under the linear map
$$\Phi: (m_1,\dots, m_d) \mapsto m_1g_1 + \dots + m_d g_d. $$
The numbers $g_i$ are the \emph{generators } of $P$, the numbers $M_i$ are the \emph{dimensions} of $P$.  We refer to $r$ as the \emph{rank} of $P$.
 We say that $P$ is \emph{proper} if this map is one to one.
We define by $\prod_{i=1}^r (2\lfloor M_r\rfloor+1) $ the volume of $P$. 
For more discussion about GAPs (including non-symmetric GAPs, which we will not use here), see \cite{TVbook}.

For a vector $V =(v_1, \dots, v_n)$, define  the \emph{concentration probability}
$$p_{\xi}(V) := \sup _{ x \in \R} \P\left( \sum_{i=1}^n \xi_i v_i = x \right) , $$ 
where $\xi_i$ are iid copies of $\xi$.  In the notation of the previous section, a vector $V$ is then rich precisely when
$p_{\xi}(V) \ge n^{-A}$.  Abusing the notation slightly, we also think of $V$ as a (multi)-set, as the ordering of the coordinates plays no role. 

The next theorem determines the structure of rich vectors/sets, asserting that such vectors mostly lie inside a GAP $P$ of bounded rank. 

\begin{theorem} [Structure theorem for rich vectors] \label{theorem:inverse}
Let $\delta <1$ and $A$ be positive constants.  There are constant $d_0 = d_0(\delta,A) \ge 1 $ and $C_0 = C_0(\delta,A)$ such that the following holds. Assume that
$$p_{\xi}(V)  \ge  n^{-A}. $$ Then for any $n^{\delta}  \le m \le n$, there exists a proper 
symmetric GAP $Q$ of rank  $r \le r_0 $  with volume at most $ C_0 p_{\xi} (v_1, \dots, v_n)^{-1} m ^{-r/2} $  such that $P$ contains all but at most $m$ elements of $V$ 
(counting multiplicities). 
\end{theorem}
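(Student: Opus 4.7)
The plan is to reduce to an essentially Bernoulli setting and then invoke the continuous inverse Littlewood--Offord theorem from \cite{TVinverse}.

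First I would symmetrize: setting $\eta := \xi - \xi'$ for an independent copy $\xi'$ gives $p_\eta(V) \geq p_\xi(V)^2 \geq n^{-2A}$, with $\eta$ symmetric around $0$. The non-triviality hypothesis $\P(\xi = x) \leq 1-\mu$ yields $\P(\eta \neq 0) \geq \mu'$ for some $\mu' = \mu'(\mu) > 0$. Conditioning on $|\eta|$ landing in a suitable small interval produces a scaled Bernoulli component $\pm s$ whose mass is bounded below by a constant depending only on $\mu$; after absorbing $s$ into $V$ we may assume $\eta$ has a genuine $\pm 1$ Bernoulli component. I would then apply the Fourier/Esseen inequality
$$p_\eta(V) \ll \int_{-1/2}^{1/2} \prod_{i=1}^n |\phi_\eta(2\pi v_i t)|\, dt,$$
together with the bound $|\phi_\eta(2\pi u)|^2 \leq 1 - c\|u\|_{\R/\Z}^2$ coming from the Bernoulli component. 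The lower bound $p_\eta(V) \geq n^{-2A}$ then forces, on a set of $t$ of measure $\gg n^{-C(A)}$, the quantity $\sum_i \|v_i t\|_{\R/\Z}^2$ to be $O(\log n)$.

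At this stage the hypotheses match those of the continuous inverse theorem of \cite{TVinverse}, which directly outputs, for every $n^\delta \leq m \leq n$, a proper symmetric GAP $Q$ of rank $r \leq r_0(\delta, A)$ and volume $O(p_\eta(V)^{-1} m^{-r/2})$ containing all but at most $m$ of the $v_i$. Substituting $p_\eta(V)^{-1} \leq p_\xi(V)^{-2}$ and doubling $A$ gives the stated bound; the properness of $Q$ is obtained by the standard step of refining any GAP to a proper one of comparable rank and volume.

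The main obstacle, were one to prove this from scratch, is the sharp $m^{-r/2}$ dependence of the volume bound, uniformly over $m \in [n^\delta, n]$. Obtaining this scale-dependent sharpening is the central content of \cite{TVinverse}; it rests on a Pl\"unnecke--Ruzsa analysis of the ``large level sets'' produced by a long random walk $\sum_i \eta_i v_i$ with $m$ steps, and it is this step---rather than the routine symmetrization and Fourier reductions above---that contains the real work.
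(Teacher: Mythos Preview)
The paper does not prove this theorem; it simply cites \cite[Theorem 2.1]{NVoptimal} as a black box and notes that it sharpens earlier results in \cite{TVinverse, TVinverse1}. Your proposal instead sketches a reduction to \cite{TVinverse}, so the two approaches differ only in which external reference is invoked.

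That said, your reduction has two genuine gaps. First, the sharp volume bound with the $m^{-r/2}$ factor, uniform over all $n^\delta \le m \le n$, is precisely the advance of \cite{NVoptimal} over \cite{TVinverse}; the inverse theorem in \cite{TVinverse} does not directly output a GAP of volume $O(p_\eta(V)^{-1} m^{-r/2})$ in this form. You acknowledge in your final paragraph that this sharpening is ``the real work,'' but then attribute it to \cite{TVinverse} rather than \cite{NVoptimal}, so your argument is circular at exactly the nontrivial step.

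Second, the symmetrization $\eta = \xi - \xi'$ only gives $p_\eta(V) \ge p_\xi(V)^2$, so even granting a sharp inverse theorem for $\eta$ you would obtain volume $\ll p_\xi(V)^{-2} m^{-r/2}$, not the stated $p_\xi(V)^{-1} m^{-r/2}$. Your remark that ``doubling $A$ gives the stated bound'' does not repair this: the parameter $A$ governs only the hypothesis $p_\xi(V) \ge n^{-A}$ and the constants $d_0, C_0$, whereas the conclusion is phrased in terms of $p_\xi(V)$ itself. A square loss in the volume bound is a strictly weaker statement. (It happens that for the downstream application in Section~4 this loss would be harmless, since $p \ge n^{-A}$ and the decisive gain is the $n^{-d/2}$ factor; but it does not recover the theorem as stated.) The result in \cite{NVoptimal} handles general non-degenerate $\xi$ directly, without passing through symmetrization, which is how the sharper exponent is obtained.
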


\begin{proof} See \cite[Theorem 2.1]{NVoptimal}.  This theorem extended earlier results in \cite{TVinverse, TVinverse1}; see \cite{NVsur} for a survey. 
\end{proof}

For our purpose, we are going to need the following refinement of Theorem \ref{theorem:inverse}, in which the GAP $P$ not only contains most of the rich vector $V$, but also contains a large subset of $V$ that does not concentrate too strongly.
  
\begin{theorem}[A finer structure theorem for rich vectors]\label{struct-rich}  Let $0 < \eps < 1/4$ and $A > 0$ be fixed, and let $V = (v_1,\dots,v_n)$ be a rich (multi-) set.  Set $d_0 = d_0(1/2,A)$ and $C_0 = C_0(1/2,A)$ from Theorem \ref{theorem:inverse}.
Then there  are (multi-)sets  $W'\subset  W \subset V$ where   $|W|  \geq n - n^{1-\eps/4}, |W'| \le \eps n$,  a parameter $p \ge n^{-A}$, 
 and  a   GAP $P$ of  rank $d \leq d_0$ and volume at most $2C_0  p^{-1} n^{-d/2}$ such that  the following holds: 
\begin{itemize} 
\item $W \subset P$. 
\item  $p_{\xi} (W') \le n^{d_0 \eps} p $.  \end{itemize} 
\end{theorem}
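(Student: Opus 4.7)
The plan is to use Theorem~\ref{theorem:inverse} as the main ingredient, producing the GAP $P$ and the large subset $W$, and then to carry out a refinement step that extracts the small subset $W'$ with controlled concentration probability.

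First I would apply Theorem~\ref{theorem:inverse} with $\delta=1/2$, the same $A$, and scale $m:=n^{1-\eps/4}$ (which satisfies $n^{1/2}\le m\le n$ since $\eps<1/4$). This outputs a proper symmetric GAP $P$ of rank $d\le d_0$ and volume at most $C_0 p_\xi(V)^{-1}m^{-d/2}=C_0 p_\xi(V)^{-1}n^{-d/2+d\eps/8}$ such that all but at most $n^{1-\eps/4}$ elements of $V$ lie in $P$. Setting $W:=V\cap P$ immediately yields $W\subset P$ and $|W|\ge n-n^{1-\eps/4}$, which disposes of the first part of the conclusion.

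The core of the proof is the construction of $W'\subset W$ with $|W'|\le\eps n$ and a bound on $p_\xi(W')$. My plan is to establish a Halász-type upper bound: because $W$ lies in a proper GAP of rank $d$ and volume $\mathrm{vol}(P)$, a generic subset $W'\subset W$ of size $\lfloor\eps n\rfloor$ should satisfy $p_\xi(W')\lesssim \mathrm{vol}(P)^{-1}(\eps n)^{-d/2}$. This is the natural counterpart to Theorem~\ref{theorem:inverse} (which is essentially a lower bound of the same form for $p_\xi(V)$), but it is not a direct consequence of it. I would prove existence of such a $W'$ either by an averaging argument over random subsets of $W$, estimating the relevant characteristic function using that $P$ is proper, or by iterating Theorem~\ref{theorem:inverse} itself over a dyadic scale of subset sizes and pigeonholing on the rank and the volume.

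Once $W'$ is in hand, I would define $p$ to reconcile the three numerical constraints: set
\[
p:=\max\Bigl(n^{-A},\; p_\xi(W')/n^{d_0\eps},\; 2C_0\, n^{-d/2}/\mathrm{vol}(P)\Bigr).
\]
The bound $p\ge n^{-A}$ is then immediate, the volume estimate $\mathrm{vol}(P)\le 2C_0 p^{-1}n^{-d/2}$ holds because $p$ dominates the third term, and $p_\xi(W')\le n^{d_0\eps}p$ holds because $p$ dominates the second. The main obstacles are twofold: (i) proving the Halász-type upper bound on $p_\xi(W')$, which requires genuine Fourier-analytic input beyond Theorem~\ref{theorem:inverse} and constitutes the technical heart of the refinement; and (ii) absorbing the spurious factor $n^{d\eps/8}$ in the volume estimate coming from $m=n^{1-\eps/4}$ (rather than $m=n$) — this is precisely why the conclusion is stated with the slack $n^{d_0\eps}$, and forces the delicate choice of $p$ above rather than the naive $p=p_\xi(V)$.
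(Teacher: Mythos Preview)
Your definition of $p$ as a maximum is the wrong way around for the volume constraint. If $p \ge 2C_0 n^{-d/2}/\mathrm{vol}(P)$, then $\mathrm{vol}(P) \ge 2C_0 p^{-1} n^{-d/2}$, not $\le$. The volume bound forces $p$ to be \emph{small}, while $p\ge n^{-A}$ and $p_\xi(W')\le n^{d_0\eps}p$ force $p$ to be \emph{large}; a single choice of $p$ works only if the Hal\'asz-type inequality you want and the volume coming out of one application of Theorem~\ref{theorem:inverse} are already compatible. They need not be. Concretely, with $m=n^{1-\eps/4}$ the volume bound you get is $C_0 p_\xi(V)^{-1} n^{-d/2+d\eps/8}$, and if $p_\xi(V)$ is exactly $n^{-A}$ then no $p\ge n^{-A}$ satisfies $\mathrm{vol}(P)\le 2C_0 p^{-1} n^{-d/2}$. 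The slack $n^{d_0\eps}$ lives on the \emph{other} inequality and does not help here.

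More fundamentally, the Hal\'asz-type bound $p_\xi(W')\lesssim \mathrm{vol}(P)^{-1}(\eps n)^{-d/2}$ is false for the $P$ produced by a single application of Theorem~\ref{theorem:inverse}: nothing prevents $W$ from sitting inside a proper sub-GAP $P'\subsetneq P$ of lower rank, in which case every $\eps n$-subset $W'$ has $p_\xi(W')$ governed by $\mathrm{vol}(P')$ and $\operatorname{rank}(P')$, not by $\mathrm{vol}(P)$ and $d$. The paper resolves exactly this by an iteration. It first proves a dichotomy (Proposition~\ref{prop1}): given $V\subset P$, either some $\eps n$-subset already has $p_\xi\le n^{d_0\eps}p$ (stability, and you stop), or \emph{every} $\eps n$-subset has large concentration, in which case one applies Theorem~\ref{theorem:inverse} to each such subset, reduces each output GAP to be ``commensurate'' with $P$ so that only $n^{O(1)}$ GAPs can arise, and pigeonholes to find a single strictly smaller GAP $P'$ containing all but $n^{1-\eps/3}$ elements of $V$, with a new parameter $p'=n^{\eps/2}p$. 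Iterating this $O(1)$ times, $p_i$ grows geometrically while $|V_i|$ drops only by $O(n^{1-\eps/3})$, so the stability branch must eventually trigger. Your vague alternative ``iterating Theorem~\ref{theorem:inverse} over a dyadic scale and pigeonholing'' is in the right direction, but the substance --- the commensuration trick that makes the pigeonhole finite, and the dichotomy that drives the iteration --- is missing.
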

 
We now prove this theorem.   We first establish the following proposition:

\begin{proposition}  \label{prop1}  Let $A \geq 0$ be fixed, and set $d_0 := d_0(1/2,A)$ and $C_0 := C_0(1/2,A)$.  Let   $P$ be a proper GAP of  rank $d \le d_0$ and volume at most $2C_0 p^{-1} n^{-d/2}$ for some 
$p \ge n^{-A}$.   Let $v_1,\ldots,v_n \in P$ (allowing repetitions).  Assume $n$ sufficiently large depending on $A$.  Then  one of the following statements holds:

\begin{itemize}
\item[(i)] (Stability) There are indices $1 \leq i_1 < \ldots < i_k \leq n$ with $k \leq \eps n$ such that the (multi-)set $V':= \{v_{i_1}, \dots, v_{i_k} \} $ satisfies 
$$ p_\xi(V')  \leq n^{d_0 \eps} p.$$

\item[(ii)] (Concentration)  There exists a 
 GAP $P'$ of some rank $d' \leq d_0 $ and volume at most $(n^{\eps/2} p)^{-1}  n^{-d'/2}$ which contains at least $n-n^{1-\eps/3}$ elements of $\{v_1,\ldots,v_n\}$.
\end{itemize}
\end{proposition}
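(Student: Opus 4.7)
My approach is a dichotomy on the actual concentration probability $p^\star := p_\xi(v_1,\dots,v_n)$ of the tuple $V := (v_1,\dots,v_n)$, with Theorem \ref{theorem:inverse} as the main engine. I fix a threshold $\alpha = \alpha(\eps,d_0)$ satisfying $\eps/2 + \eps d_0/6 < \alpha < d_0 \eps$; this interval is nonempty provided $d_0 \ge 1$ and $\eps$ is small enough. Case A is $p^\star \ge n^\alpha p$, which I use to derive conclusion (ii); Case B is $p^\star < n^\alpha p$, which I use to derive conclusion (i).

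In Case A, I apply Theorem \ref{theorem:inverse} directly to $V$ with parameters $\delta = 1/2$ and $m = n^{1-\eps/3}$. The hypothesis $p_\xi(V) = p^\star \ge n^{-A}$ is satisfied, and the theorem produces a proper symmetric GAP $P'$ of some rank $d' \le d_0$, with volume at most $C_0 (p^\star)^{-1} m^{-d'/2}$ and containing all but at most $m = n^{1-\eps/3}$ of the $v_i$. Substituting $p^\star \ge n^\alpha p$ and using $\alpha > \eps/2 + \eps d_0/6$, a direct comparison of exponents shows the volume is at most $(n^{\eps/2} p)^{-1} n^{-d'/2}$, and the excluded set has size $\le n^{1-\eps/3}$, so $P'$ witnesses (ii).

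In Case B, every subset $V' \subset V$ satisfies $p_\xi(V') \ge p^\star$, but the target $p_\xi(V') \le n^{d_0 \eps} p$ leaves a multiplicative slack of $n^{d_0\eps - \alpha} > 1$ by the choice of $\alpha$. I would construct $V'$ of size $\lfloor \eps n \rfloor$ either greedily (selecting at each step the index that most reduces $p_\xi$) or by random sampling, bounding $p_\xi(V')$ via a second-moment / characteristic-function estimate. The informal justification is that for iid $\xi_i$, restricting to a random subset of proportion $\eps$ typically inflates the concentration probability by only an $\eps^{-O(1)}$ factor, much smaller than the available slack $n^{d_0\eps - \alpha}$.

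The main obstacle is making the Case B bound rigorous: generic subset-concentration estimates are not sharp enough because arithmetic structure can interact unpredictably with subset selection. The natural tool is the hypothesis $V \subset P$ for a proper GAP $P$, which lets one write the characteristic function of $\sum_i \xi_i v_i$ as a product indexed by the generators of $P$; a Halász-type Fourier analysis on this product should yield a quantitative bound of the form $p_\xi(V')/p^\star \le n^{O(\eps)}$ for a suitable $V'$ of size $\eps n$. The final step is then parameter bookkeeping: verifying that $\eps$ can be chosen small enough relative to $d_0$ so that the window $(\eps/2 + \eps d_0/6, \; d_0 \eps)$ admits an $\alpha$ with enough room to absorb both the polynomial losses from Theorem \ref{theorem:inverse} in Case A and the subset-concentration estimate in Case B.
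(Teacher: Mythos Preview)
Your Case A is fine: once $p_\xi(V)\ge n^\alpha p\ge n^{-A}$, a single application of Theorem~\ref{theorem:inverse} with $m=n^{1-\eps/3}$ delivers conclusion (ii), and your choice of $\alpha>\eps/2+\eps d_0/6$ makes the volume arithmetic close. Note that this case never uses the hypothesis $V\subset P$.

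Case B, however, is a genuine gap, and you flag it yourself. The claim you need is: if $p_\xi(v_1,\dots,v_n)<n^\alpha p$, then some $\eps n$-subset $V'$ has $p_\xi(V')\le n^{d_0\eps}p$. Passing to a subset can only \emph{increase} the concentration probability, so you are asking for a quantitative bound on that increase. Your suggestion of a Hal\'asz-type argument is plausible heuristically, but it is not carried out, and it is not clear it can be made to work uniformly over all configurations permitted by $V\subset P$ without a further structural dichotomy. More to the point, there is no a priori reason Case B should force (i) rather than (ii): the proposition only promises the disjunction, and your dichotomy assigns the wrong branch if $p_\xi(V)$ is small yet every $\eps n$-subset still has $p_\xi>n^{d_0\eps}p$.

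The paper proceeds quite differently. It assumes (i) fails, so that \emph{every} subset $V_I$ of size $\lfloor\eps n\rfloor$ satisfies $p_\xi(V_I)>n^{d_0\eps}p\ge n^{-A}$, and applies Theorem~\ref{theorem:inverse} to each such $V_I$ (with $m=n^{1-\eps/2}$) to obtain a GAP $P_I$ of the required volume containing most of $V_I$. The crucial step---and the place where the hypothesis $V\subset P$ is actually used---is a full-rank reduction that replaces each $P_I$ by a GAP commensurate with $P$; this cuts the number of possible $P_I$ down to $n^{O(1)}$, because the generators of $P_I$ can then be specified by elements of $P$. Pigeonholing over a uniformly random $I$ yields a single GAP $P'$ that works for an $n^{-O(1)}$ fraction of all $\lfloor\eps n\rfloor$-subsets, and a Chernoff bound converts this into the statement that $P'$ contains all but $n^{1-\eps/3}$ of the $v_i$, which is (ii).

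So the missing idea is not analytic but combinatorial: rather than trying to locate one good subset directly, one exploits the failure of (i) on \emph{all} subsets simultaneously, and the ambient GAP $P$ is what makes the resulting family of structure-GAPs finite enough to pigeonhole.
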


\begin{proof} Assume that the stability conclusion (i) fails. 
 Let $I:= \{i_1,\ldots,i_k\}$ be a  subset of $\{1,\ldots,n\}$ of cardinality $k := \lfloor \eps n\rfloor$ to be chosen later, and set $V_I := \{v_{i_1} , \dots, v_{i_k} \}$. 
 Then
$$ p_\xi(V_I) > n^{d_0 \eps} p \ge n^{-A } .$$

Applying Theorem \ref{theorem:inverse} with $m:= n^{1 -\eps/2}$ and $\delta=1/2$ (and $n$ replaced by $k$),  we obtain a proper symmetric GAP $P_I$ of some rank $d_I \leq d_0$ and volume at most 

$$C_0 n^{-d_0 \eps} p^{-1} k^{-(1 -\eps/2) d'/2 }  \le  (n^{\eps/2} p)^{-1}  n^{-d'/2} $$
\noindent  which contains at least $k - n^{1-\eps/2}$  elements of $V_I$. 

At present, $P_I$ is unrelated to $P$, but we can make $P_I$ ``commensurate'' with $P$ as follows.  Write
$$ P_I = \{ n_1 w_1 + \ldots + n_{d_I} w_{d_I}: |n_i| \leq N_i \hbox{ for all } 1 \leq i \leq d_I \}$$
and let $\Sigma \subset \Z^{d_I}$ be the set of $d_I$-tuples $(n_1,\ldots,n_{d_I} ) \in \Z^{d_I}$ with $|n_i| \leq N_i$ for all $1 \leq i \leq d_I$ with $n_1 w_1 + \ldots + n_{d_I} w_{d_I} \in P$.  We say that $P_I$ has \emph{full rank} in $P$ if $\Sigma$ spans $\R^{d_I}$ as a real vector space.  We claim that we may assume without loss of generality that $P_I$ is of full rank in $P$.  Indeed, if this is not the case, then $\Sigma$ is contained in a hyperplane, which by symmetry we may take to be given by some equation $x_{d_I}  = a_1 x_1 + \ldots + a_{d_I-1} x_{d_I-1}$.  But then every element
$n_1 w_1 + \ldots + n_{d_I} w_{d_I}$ in $P_I  \cap P$ can be rewritten as
$ \sum_{j=1}^{d_I- 1} n_j (w_j + a_j w_{d_I})$
and so one may replace $P_I$ with a rank $d_I-1$ GAP $P'_I $ of volume at most that of $P_I$, such that $P_I \cap P = P'_I  \cap P$.   By the principle of infinite descent, we may iterate this procedure until we replace $P_I$ with a functionally equivalent GAP which is of full rank in $P$.

The purpose of making this full rank reduction is that it cuts down on the number of possible $P_I$.  Indeed, to specify $P_I$, one needs to specify the rank $d_I$, the dimensions $N_1,\ldots,N_{d_I}$, and the generators $w_1,\ldots,w_{d_I}$.  As $P_I$ has rank at most $d_0$  and volume at most $n^{O(1)}$ we have $O(n^{O(1)})$ choices for $d_I, N_1,\ldots,N_{d_I}$.  To specify the generators, it suffices to choose $d_I$ linearly independent elements $(n_1,\ldots,n_{d_I})$ of $\Sigma$, and their representatives $n_1 w_1 + \ldots + n_{d_I} w_{d_I}$ in $P$.  As $P$ has volume $O(n^{O(1)})$, we see that the total number of choices here is also $O(n^{O(1)})$.  Thus we see that there are at most $O(n^{O(1))}$ choices for $P_I$.

Applying the pigeonhole principle, we conclude that there exists a fixed GAP $P'$ of  rank $d' \le d_0$ and volume at most $ (n^{\eps} p)^{-1} n^{-d'/2}$ such that, when $I$  is chosen uniformly at randomly from all subsets of size $k:=\lfloor \eps n\rfloor$ of  $\{1,\ldots,n\}$ 
then with probability $\gg n^{-O(1)}$, at least $k - n^{1-\eps/2}$ of the elements of $P_I$ lie in $P'$.  A routine application of the Chernoff inequality then shows that at least $n-n^{1-\eps/3}$ of the $v_1,\ldots,v_n$ lie in $P'$.  This gives the desired concentration conclusion (ii).
\end{proof}

To prove Theorem \ref{struct-rich}, we apply Proposition \ref{prop1}  iteratively, as follows.

\begin{itemize}
\item[(i)]  Let $V$ be a rich vector, and set $p_1 := p_\xi(V)$, thus $p_1 \geq n^{-A}$ by hypothesis.  By Theorem \ref{theorem:inverse} (with $m=n^{1-\eps/2})$, we may find a GAP $P_1$ of rank $d_1 \leq d_0$ and volume at most $C_0 p^{-1} n^{-d_1/2}$ which contains all but at most $n^{1-\eps/2}$ elements of $V$.  Set $V_1 := V \cap P_1$ and $n_1 := |V_1|$, thus $n_1 \geq n - n^{1-\eps/2}$.  Initialize $i=1$, thus $V_i, P_i, n_i, p_i$ have all been defined.
\item[(ii)] If there exist a subset $V'_i$ of $V_i$ of size $k_i := \lfloor \eps n_i \rfloor$ such that $p_\xi(V'_i) \leq n_i^{d_0 \eps} p_i$, then we set $W',W,p,P$ equal to $V'_i, V_i, p_i, P_i$ respectively, and {\tt STOP}.  Otherwise, if no such subset $V'_i$ exists, we move on to step (iii).
\item[(iii)] If $p_i < n_i^{-A}$ then {\tt STOP}.  Otherwise, by Proposition \ref{prop1} with $n$ replaced by $n_i$, we may find a set $V_{i+1} \subset V_i$ of size $n_{i+1} := |V_{i+1}| \geq n_i - n_i^{1-\eps/3}$ contained in a GAP $P_{i+1}$ of rank $d_{i+1}$ at most $d_0$ and volume at most $p_{i+1}^{-1} n_{i+1}^{-d_{i+1}/2}$, where $p_{i+1} := n_i^{\eps/2} p_i$.  Thus $V_{i+1}, P_{i+1}, n_{i+1}, p_{i+1}$ have all been defined. 
\item[(iv)] Increment $i$ to $i+1$ and return to step (ii).
\end{itemize}

Note that after each successfully completed loop, the probability $p_i$ increases by a multiplicative factor of $n_i^{\eps/2}$, while $n_i$ only decreases by an additive factor of $n_i^{1-\eps/3}$; since $p_1$ is initially at least $n^{-A}$, we see that after $O(1)$ steps $p_i$ will exceed $1$, at which point we must terminate at step (ii).  Thus the above algorithm can only run for at most $O(1)$ steps.  The same analysis also shows inductively that $p_i \geq n_i^{-A}$ for all $i=O(1)$, so one never terminates at step (iii), and so must instead terminate at step (ii).  It is then routine that $W',W,p,P$ obey all the properties required for Theorem \ref{struct-rich}.

\section {Proof of Proposition \ref{prop-2} } 

We can now conclude the proof of Proposition \ref{prop-2}, and more precisely the stronger bound \eqref{stronger}.   One can show that for any given rich vector $v$, the probability that $v$ is an eigenvector is very small. Ideally, we would like to conclude 
by bounding the number of rich vectors, using the inverse theorems, and then apply the union bound (this will explain the entropy estimates below). However, this strategy does not work straightforwardly, and 
we will need to introduce an additional twist to see it through.  In this section, all implied constants in the $O()$ notation can depend on the fixed quantity $A$ (but not on the quantity $\eps$ to be introduced shortly). 

Suppose that we are in the event $E_{2,n}$, thus $M_n$ has an eigenvector $V = (v_1,\dots,v_n)$ which is rich.  Let $\eps>0$ be a small fixed quantity (depending on $A$) to be chosen later.  Applying Theorem \ref{struct-rich}, we can find (multi-)sets  $W'\subset  W \subset V$ where   $|W|  \geq n - n^{1-\eps/4}, |W'| \le \eps n$, as well as a parameter $p \gg n^{-A}$,  and  a   GAP $P$ of  rank $d = O(1)$ and volume at most $O( p^{-1} n^{-d/2} )$ such that $W \subset P$ and
$$ p_{\xi} (W') \ll n^{d_0 \eps} p.$$
By rounding $p$ to the nearest multiple of $n^{-A}$, we may assume that $p$ is an integer multiple of $n^{-A}$, which must then be of size at most $O(n^{O(d/2)})$ since otherwise $P$ would have volume less than $1$.

Write $W=\{v_{i_1}, \dots, v_{i_{n'}}  \}$ and $W' =\{v_{j_1} , \dots, v_{j_k} \}$, where $k \leq \eps n$ and $n-n^{1-\eps/4} \leq n' \leq n$.  From Stirling's formula we observe that the total number of possibilities for $p$, $d$, $n'$, $k$, $i_1,\ldots,i_{n'}$ and $j_1,\ldots,j_k$ is at most $\exp( O( n \eps \log \frac{1}{\eps} ) )$.  Thus, if we let $E_{2,n,p,d,n',k,i_1,\dots,i_{n'},j_1,\dots,j_k}$ denote the event that $M_n$ has a rich eigenvector obeying the above assertions, then by the union bound we will obtain \eqref{stronger} if we can show that
$$ \P( E_{2,n,p,d,n',k,i_1,\dots,i_{n'},j_1,\dots,j_k} ) \ll \exp( - cn )$$
for some fixed $c>0$ independent of $\eps$, and for sufficiently small $\eps$.

Let us work now with a single choice of $n,p,d,n',k,i_1,\dots,i_{n'},j_1,\dots,j_k$, and abbreviate $E_{2,n,p,d,n',k,i_1,\dots,i_{n'},j_1,\dots,j_k}$ as $E_3$, thus our task is to show that
\begin{equation}\label{pec}
\P(E_3) \ll \exp(-cn).
\end{equation}
By symmetry we may assume that $i_l = l$ for $l=1,\dots,n'$, and that $j_l = l$ for $l=1,\dots,k$.  Thus on the event $E_3$, we now have
\begin{equation}\label{vaw}
v_1,\ldots,v_{n'} \in P
\end{equation}
and
\begin{equation}\label{pavel}
 p_\xi(v_1,\dots,v_k) \ll n^{O(\eps)}  p.
\end{equation}

We cover $E_3$ by the events $E'_3$ and $E''_3$, where $E'_3$ is the event that we can take $d=0$, and $E''_3$ is the event that we can take $d>0$.

{\it Case 1:}  $d=0$.  In this case, $P$ is trivial and so $v_1=\ldots=v_{n'}=0$.  We now use a conditioning argument of Koml\'os \cite{Kom}.  Split
\begin{equation}\label{man}
M_n = \begin{pmatrix}
M_{n'} & B \\
B^* & C
\end{pmatrix}
\end{equation}
where $M_{n'}$ is the top left $n' \times n'$ minor of $M_n$, $B$ is the $n' \times n-n'$ top right minor, $B^*$ is the adjoint of $B$, and $C$ is the bottom right $n-n' \times n-n'$ minor. By hypothesis,  $M_n$ has an eigenvector $V$ with the first $n'$ coefficients vanishing, which implies from the eigenvector equation $M_n V = \lambda V$ and \eqref{man} that the matrix $B$ does not have full rank.  Thus there exists $n-n'$ rows of $B$ which span a proper subspace $H$ of $\R^{n-n'}$ in which the remaining $n' - (n-n')$ rows necessarily lie.  The entropy cost of picking these $n-n'$ rows is $\binom{n'}{n-n'}$.  Now suppose we fix the position of these rows, as well as the precise values that the random matrix attains on these rows, so that $H$ is now deterministic.  The entries of $B$ on the remaining rows remain identically distributed with law $\xi$.  This can be seen by embedding $H$ in a hyperplane, which can be written as a graph of one of the $n$ coordinates of $\R^n$ as a linear combination of the other $n-1$ coordinates, and then using \eqref{xixm}, we conclude that each of these rows has an independent probability of at most $1-\mu$ of lying in $H$.  Putting all this together, we conclude that
$$
\P(E'_3) \leq \binom{n'}{n-n'} \times (1-\mu)^{n'-(n-n')}  
$$
and hence from Stirling's formula and the size $n' = n - O(n^{1-\eps/4})$ of $n$ that
\begin{equation}\label{pe1}
\P(E'_3) \ll \exp(-cn)
\end{equation}
\noindent for some fixed $c>0$ independent of $\eps$ (but depending on $\mu$).

%Let $u$ be a non-zero vector orthogonal to $H$. One can prove it by conditioning on everything expect one coordinate which corresponds to a non-zero coordinate of $u$. 

{\it Case 2.}  $d \geq 1$.  As in Case 1, we split $M_n$ using \eqref{man}.  Write $V':= (v_1,\ldots,v_{n'})$ (viewed as a column vector), then from the eigenvalue equation $M_n V = \lambda V$ and \eqref{man},  we see that $M_{n'} V'$ lies in the space spanned by $V'$ and the $n-n'$ columns of $B$. 
We expand the GAP $P$ as
$$P = \{ n_1 w_1+ \dots + n_d w_d: |n_i| \le N_i \}$$
for some $w_1,\dots,w_d \in \R$ and $N_1,\dots,N_d \geq 1$. By \eqref{vaw}, we have
\begin{equation}\label{vad}
V'  = w_1 V'_{(1)} + \ldots + w_d V'_{(d)}
\end{equation}
where each $V'_{(i)} \in \R^{n'}$ is a vector whose entries all lie in $[-N_i,N_i] \cap \Z$. In particular, if we let $H$ be the subspace of $\R^{n'}$ spanned by $V'_{(1)},\dots,V'_{(d)}$ and the columns of $B$, then $V'$ lies in $H$, and $H$ has dimension at most $d + (n-n') = O(n^{1-\eps/4})$.

The total number of possibilities for each vector $V'_{(i)}$ is at most $(2N_i+1)^n$. By Theorem \ref{struct-rich},  $P$ has volume
at most $O(p^{-1} n^{-d/2})$, so the total number of possibilities for $V'_{(1)},\ldots,V'_{(d)}$ is (very crudely) at most $O( p^{-1} n^{-d/2} )^n$.  Thus, by paying this as an entropy cost, we may assume that $V'_{(1)},\ldots,V'_{(d)}$ are fixed.
 For the rest of the argument, we condition on the minors $B, C$ in \eqref{man}, so the subspace $H$ defined previously is now deterministic, while the matrix $M_{n'}$ remains random (and is of the form in Definition \ref{model}, with $n$ replaced by $n'$).  The real numbers $w_1,\dots,w_d$ are also random and may potentially depend on $M_{n'}$.

We now split $M_{n'}$ further as
\begin{equation}\label{jay}
 M_{n'} = \begin{pmatrix}
M_{k} & D \\
D^* & E
\end{pmatrix}
\end{equation}
where $M_k$ is the top left $k \times k$ minor of $M_{n'}$ (or of $M_n$), $D$ is a $k \times (n'-k)$ matrix, and $E$ is a $(n'-k) \times (n'-k)$ matrix. We aim to bound the probability 
\begin{equation}\label{pq}
\P( M_{n'} V' \in H )
\end{equation}
(conditioning on $B,C$ as mentioned above); any bound we obtain on this probability, multiplied by the previously mentioned entropy cost of $O( p^{-1} n^{-d/2} )^n$, will provide a bound on $\P( E''_3 )$ by Fubini's theorem and the union bound.

To illustrate our ideas, let us first consider the toy case when $H =\{0 \}$ and the generators $w_i, 1 \le i \le d$ are fixed, which would make $V'$ deterministic.  We split
\begin{equation}\label{bone}
V'= \begin{pmatrix}
V'' \\
V'''
\end{pmatrix}
\end{equation}
where $V''$ is the column vector $(v_1,\dots,v_k)$, and $V'''$ is the column vector $(v_{k+1},\dots,v_{n'})$.  Expanding the condition $M_{n'} V' \in H$ using \eqref{jay} and \eqref{bone} and extracting the lower $n'-k$ entries of $M_{n'} V'$, we see that
$$ D^* V'' = w$$
where $w \in \R^{n'-k}$ is the vector $w := - E V'''$.  If we condition $M_k$ and $E$ to be deterministic, then $w$ becomes deterministic also, while each entry of $D^*$ remains independent with distribution $\xi$, and by \eqref{pavel} each entry of $D^* V''$ will match its corresponding entry of $w$ with an independent probability $O(n^{O(\eps)} p)$.  Multiplying these probabilities and integrating out the conditioning, we obtain the bound
$$
\P( M_{n'} V' \in H ) \ll O(n^{O(\eps)} p)^{n'-k}
$$
for \eqref{pq}, which as mentioned earlier would give an upper bound for $P(E''_3)$ of the form
$$
O( p^{-1} n^{-d/2} )^n \times O(n^{O(\eps)} p)^{n'-k}$$
which simplifies to
$$ n^{-\frac{d}{2} n + O(\eps n)} (1/p)^{k + (n-n')};$$
since $k + (n-n') = O(\eps n)$ and $p \geq n^{-A}$, this can be bounded by $\exp(-cn)$ for some fixed $c>0$ independent of $\eps$, with plenty of room to spare, if $\eps$ is chosen small enough.

There are two problems with this toy argument. Firstly, in general $H$ is not $\{0\}$. However, we will be able to (morally) reduce to the $H = \{0\}$ case by a linear projection argument that incurs a tolerable extra entropy loss (using the fact that  the dimension of $H$ is only  $O(n^{1-\eps})$). The second problem is that the number of choices for the generators $w_i$  is potentially infinite or even uncountable, so the entropy loss here is unacceptable.  We are going to avoid this problem by not counting the number of $w_i$, but a finite set of representatives, 
 which is linear algebraically equivalent.

We turn to the details.  We split
$$
V'_{(i)} = \begin{pmatrix}
V''_{(i)} \\
V'''_{(i)}
\end{pmatrix}
$$
for $i=1,\dots,d$, where $V''_{(i)} \in \R^k$ and $V''', V'''_{(i)} \in \R^{n'-k}$.    Expanding the condition $M_{n'} V' \in H$ using \eqref{jay}, \eqref{vad} and extracting the bottom $n-k$ coefficients, we conclude that
\begin{equation}\label{wavy}
w_1 (D^* V''_{(1)} + E V'''_{(1)}) + \ldots + w_d (D^* V''_{(d)} + E V'''_{(d)}) \in H_1
\end{equation}
where $H_1 \subset \R^{n'-k}$ is the projection of $H$ to $\R^{n'-k}$.  With our current conditioning, $H_1$ is a deterministic subspace of $\R^{n'-k}$ of some dimension $d_1 = O(n^{1-\eps/4})$.  Meanwhile, from \eqref{pavel}, and \eqref{vad} we have
\begin{equation}\label{pavel-2}
 p_\xi( w_1 V''_{1} + \ldots + w_d V''_{d} ) \leq n^{C \eps} p.
\end{equation}
for some fixed constant $C$ (independent of $\eps$).

We next reduce the space $H_1$ to the trivial space $\{0\}$.  Recall that $d_1 = O( n^{1-\eps/4} )$ is the dimension of $H_1$.  By permuting the indices if necessary, we may assume that\footnote{This does not incur any entropy cost, as $H_1$ was already deterministic under our current conditioning.} $H_1$ is a graph over the last $d_1$ coordinates of $\R^{n'-k}$.   In other words, we may express $H_1$ as
$$ H_1 = \{ (L(Y), Y): Y \in \R^{d_1} \}$$
for some (deterministic) linear map $L: \R^{d_1} \to \R^{n'-k-d_1}$.  Equivalently, we have
$$ H_1 = \{ (X,Y) \in \R^{n'-k-d_1} \times \R^{d_1}: \tilde L( X, Y ) = 0 \}$$
where $\tilde L: \R^{n'-k-d_1} \times \R^{d_1} \to \R^{n'-k-d_1}$ is the map
$$ \tilde L(X,Y) := X - L(Y).$$
If we identify $\R^{n'-k-d_1}$ with $\R^{n'-k-d_1} \times \{0\}$, then $\tilde L$ is the identity map on $\R^{n'-k-d_1}$ and has $H_1$ as its kernel.  Applying $\tilde L$ to \eqref{wavy}, we obtain
\begin{equation}\label{wavy-2}
w_1 \tilde L(D^* V''_{1} + E V'''_{1}) + \ldots + w_d \tilde L(D^* V''_{d} + E V'''_{d}) = 0.
\end{equation}
We now condition $M_k, E$ to be fixed; the only remaining random variables are the entries of the $k \times n'-k$ matrix $D$, which are iid with distribution $\xi$.  
Let $E_4$ denote the event that \eqref{wavy-2} holds for a given choice of deterministic data ($M_k, E, B, C, V_{(i)}, d, H, d_1, H_1$); we suppress the dependence of $E_4$ on this data.  If we can obtain an upper bound on the conditional probability that $E_4$ occurs, then by multiplying this bound by the previous entropy cost of $O( p^{-1} n^{-d/2} )^n$ would give an upper bound on $\P(E''_3)$.

We still need to control $\P(E_4)$.  Now that $H$ has been eliminated, the most significant remaining difficulty is the lack of control of the quantities $w_1,\dots,w_d$, which at present are arbitrary real numbers and can thus take an uncountable number of possible values. To resolve this difficulty we again use the conditioning arguments of Koml\'os \cite{Kom}.  Given any $m \times d$ matrix $M$ for any $m$, we say that $M$ has \emph{good kernel} if the kernel $\ker(M) := \{ w \in \R^d: Mw = 0 \}$ contains a tuple $(w_1,\dots,w_d)$ obeying both \eqref{wavy-2} and \eqref{pavel-2}.  If we form the $(n'-k-d_1) \times d$ random matrix $U$ with the vectors $\tilde L(D^* V''_{1} + E V'''_{1}), \ldots, \tilde L(D^* V''_{d} + E V'''_{d})$ as columns, then clearly $E_4$ is contained in the event that $U$ has good kernel.  

Trivially, $U$ has rank at most $d$.  As a consequence, one can select $d$ rows from $U$ whose row span is the same as that of $U$, or equivalently that the corresponding $d \times d$ minor of $U$ has the same kernel as that of $U$.  The number of possible ways to select these rows is $\binom{n'-k-d_1}{d}$, which we crudely bound by $n^d$. By paying an entropy cost of $n^d$ for the purposes of bounding $\P(E_4)$, we may thus assume that these row positions are deterministic, thus there are deterministic $1 \leq l_1 < \dots < l_d \leq n'-k-d_1$, and we need to bound the event that the $d \times d$ minor $U_d$ formed by the $l_1,\dots,l_d$ rows of $U$ has a good kernel.

We now condition on the $l_1,\dots,l_d$ rows of the $n'-k \times k$ matrix $D^*$, as well as the last $d_1$ rows of the same matrix $D^*$, thus leaving at least $n'-k-d_1-d$ of the first $n'-k-d_1$ rows of $D^*$ random (with entries independently distributed with law $\xi$).  As $\tilde L$ is the identity on $\R^{n'-k-d_1}$, we see that the $l_1,\dots,l_d$ entries of $\tilde L(D^* V''_{1} + E V'''_{1}), \ldots, \tilde L(D^* V''_{d} + E V'''_{d})$ are now deterministic (they do not depend on the remaining random rows of $D^*$).  In other words, the minor $U_d$ is now deterministic.  If $U_d$ does not have a good kernel, its contribution to $\P(E_4)$ is zero.  If instead $U_d$ has a good kernel, then we may find a \emph{deterministic} choice of $w_1,\dots,w_d$ in the kernel of $U_d$ that obeys both \eqref{wavy-2} and \eqref{pavel-2}.

The rest of the calculation is similar to the toy case. Consider the $i^{\operatorname{th}}$ component of the vector equation \eqref{wavy-2} for this deterministic choice of $w_1,\dots,w_d$, 
 where $1 \leq i \leq n'-k-d'''$ is not equal to any of the $l_1,\ldots,l_d$.  We can rewrite this component as
\begin{equation}\label{wavx}
 (w_1 V''_{1} + \ldots + w_d V''_{d}) \cdot R_i = x_i
\end{equation}
where $R_i \in \R^k$ is the $i^{\operatorname{th}}$ row of $D^*$, and $x_i \in \R$ is a deterministic quantity that does not depend on the remaining random rows in $D^*$.  By \eqref{pavel-2}, for each such $i$, the equation \eqref{wavx} holds with an independent probability of at most $n^{C\eps} p$, and so the probability that \eqref{wavy-2} holds in full is at most $O(n^{C\eps} p)^{n'-k-d_1-d}$.  Taking into account the entropy cost of $n^d$ mentioned earlier, we thus have
$$ \P(E_4) \leq n^d \times O(n^{C\eps} p)^{n'-k-d_1-d} \leq  n^{O(\eps n)} p^{n'-k-d_1-d}.$$
Paying the previously mentioned entropy cost of $O( p^{-1} n^{-d/2} )^n$, we then have
$$
\P(E''_3) \leq O( p^{-1} n^{-d/2} )^n \times n^{O(\eps n)} p^{n'-k-d_1-d} \leq n^{-\frac{d}{2}n + O(\eps n)} (1/p)^{n-n'+k+d_1+d}.$$
Since $p \geq n^{-A}$, $d \geq 1$, and $n-n'+k+d_1+d = O(\eps n)$, we conclude that
$$ \P(E''_3) \ll \exp( -cn )$$
for some fixed $c>0$ independent of $\eps$ (with plenty of room to spare), if $\eps$ is small enough.  Combining this with \eqref{pe1} we obtain \eqref{pec}.  This concludes the proof of \eqref{stronger}, and Theorem \ref{main-thm} follows.
  
\section{Concluding remarks}  \label{section:remarks} 

The assumption that the upper diagonal entries $\xi_{ij}$, $1 \leq i < j \leq n$ are iid is not essential; an inspection of the argument reveals that the proof continues to work if we assume that the $\xi_{ij}$ are independent and there is a constant $\mu >0$ such that 
$\P(\xi_{ij } =x) \le 1 -\mu$ for any $1 \le i < j \le n$ and $x \in \R$.  Our main tool, Theorem \ref{theorem:inverse}, holds under this assumption; see \cite{NVoptimal}.  The argument also easily extends to Hermitian random matrix models, in which the coefficients $\xi_{ij}$ for $i < j$ are allowed to be complex, and one imposes the condition $\xi_{ji} = \overline{\xi_{ij}}$.  In other words, the above arguments can extend to show the following result:

\begin{theorem} \label{theorem:general}   For any fixed $A, \mu \ge 0$ and sufficiently large $n$ the following holds. Let $\xi_{ij}, 1 \le i < j \le n$ be independent (complex or real) random variables 
such that  $\P(\xi_{ij } =x) \le 1 -\mu$ for any $1 \le i < j \le n$ and $x \in \R$.  Let $\xi_{ii}, 1\le i \le n$ be real random variables that are independent of the $\xi_{ij}, 1 \leq i < j \leq n$.  Set $\xi_{ji} = \overline{\xi_{ij}}$ for $1 \leq i < j \leq n$.  Then  the spectrum of the matrix $(\xi_{ij})_{1 \leq i < j \leq n}$ is simple with probability  at least   $1- n^{-A}$. 
\end{theorem}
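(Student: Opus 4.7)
The plan is to re-run the proof of Theorem \ref{main-thm} with cosmetic modifications, checking that each ingredient survives the dual extension to independent (non-iid) Hermitian entries. The three places requiring inspection are: (i) the linear-algebraic reduction from multiple eigenvalues to orthogonality of the last column against an eigenvector of $M_{n-1}$; (ii) the inverse Littlewood--Offord structure theorems; and (iii) the Komlós-style conditioning arguments in Cases~1 and~2 of the proof of \eqref{stronger}.

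For (i), I would diagonalize $M_{n-1}$ by a unitary change of basis, noting that the eigenvalues are real because $M_{n-1}$ is Hermitian; the identities $x_1 v_n = 0$ and $\overline{v'} \cdot X = \lambda_1 v_n$ extracted from the first and last rows of $M_n v = \lambda_1 v$ go through verbatim over $\mathbb{C}$, producing an eigenvector of $M_{n-1}$ orthogonal to $X$ in the Hermitian sense. This reduces matters to showing that eigenvectors of $M_{n-1}$ are unlikely to be rich, with the concentration probability now taken as $p_\xi(V) := \sup_{x \in \mathbb{C}} \Pr(\sum_i \xi_i v_i = x)$ and GAPs having complex generators. For (ii), the complex, non-iid version of Theorem \ref{theorem:inverse} is established in \cite{NVoptimal} with the same volume bound $C_0 p^{-1} m^{-r/2}$, and the iterative derivation of Theorem \ref{struct-rich} from it is purely combinatorial and transfers unchanged (up to constants absorbed into the $O(\eps n)$ exponents, reflecting the fact that a complex GAP of rank $d$ has real rank at most $2d$).

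For (iii), in Case~1 the deterministic structural observation that the rank-deficient matrix $B$ has $n' - (n-n')$ rows lying in a fixed hyperplane $H \subset \mathbb{C}^{n-n'}$ is unchanged; writing $H$ as the graph of one complex coordinate over the others and using only the non-triviality hypothesis on each individual $\xi_{ij}$ (and not identical distribution), each remaining row has independent probability at most $1-\mu$ of lying in $H$, yielding $\binom{n'}{n-n'}(1-\mu)^{n'-(n-n')} \ll \exp(-cn)$. In Case~2, the projection $\tilde L$, the good-kernel reduction, the entropy bookkeeping, and the final application of \eqref{pavel-2} to each remaining row $R_i$ of $D^*$ are either purely linear-algebraic over $\mathbb{C}$ or rely only on independence within a single row together with the concentration inequality, both of which hold under our hypotheses; the same product of entropy cost $O(p^{-1} n^{-d/2})^n$ and row probability $O(n^{C\eps} p)^{n'-k-d_1-d}$ yields the $\exp(-cn)$ bound as before. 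The main obstacle is cosmetic rather than substantive: carefully tracking complex versus real dimensions so that every entropy and concentration exponent still assembles into the desired exponential decay, and confirming that the version of the inverse Littlewood--Offord theorem cited from \cite{NVoptimal} indeed delivers the claimed complex, non-identically-distributed form with the stated volume bound. Once this bookkeeping is done, no genuinely new ideas are required.
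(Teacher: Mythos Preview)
Your proposal is correct and matches the paper's own treatment: the paper does not give a separate proof of this theorem but simply asserts that an inspection of the argument shows it carries over to independent non-iid and Hermitian entries, citing \cite{NVoptimal} for the required generality of Theorem~\ref{theorem:inverse}. Your step-by-step walkthrough of (i)--(iii) is exactly that inspection, spelled out in more detail than the paper bothers with.
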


\end{document}